\documentclass[11pt]{amsart}
\usepackage{amsfonts,amssymb,amsmath,amsthm}
\usepackage{fullpage}

\usepackage{amssymb}

\def\R{\mathbb{R}}
\def\Z{\mathbb{Z}}
\def\MM{M}

\def\epsilon{\varepsilon}
\def\ep{\varepsilon}

\newcommand{\commentout}[1]{}

\def\flux{\operatorname{flux}}
\def\pd{\partial}

\newcommand{\norm}[1]{\lVert #1 \rVert}
\newcommand{\br}{\begin{eqnarray}}
\newcommand{\er}{\end{eqnarray}}
\newcommand{\be}{\begin{equation}}
\newcommand{\ee}{\end{equation}}
\newcommand{\baa}{\begin{array}}
\newcommand{\eaa}{\end{array}}
\newcommand{\ba}{\begin{eqnarray}}
\newcommand{\ea}{\end{eqnarray}}


\newtheorem{theorem}{\bf Theorem}[section]

\newtheorem{thm}[theorem]{Theorem}
\newtheorem{lem}[theorem]{Lemma}
 
\newtheorem{cor}[theorem]{Corollary} 

\theoremstyle{definition}
\newtheorem{defin}[theorem]{Definition}
\newtheorem{rmk}[theorem]{Remark}


\begin{document}

\title{A survival guide for feeble fish} 

\author{Dmitri Burago}                                                          
\address{Dmitri Burago: The Pennsylvania State University,                          
Department of Mathematics, University Park, PA 16802, USA}                      
\email{burago@math.psu.edu}                                                     
                                                                                
\author{Sergei Ivanov}
\address{Sergei Ivanov:
St.\ Petersburg Department of Steklov Mathematical Institute,
Russian Academy of Sciences,
Fontanka 27, St.Petersburg 191023, Russia}
\email{svivanov@pdmi.ras.ru}

\author{Alexei Novikov}
\address{Alexei Novikov: The Pennsylvania State University,                          
Department of Mathematics, University Park, PA 16802, USA}                      
\email{anovikov@math.psu.edu}                                

\thanks{The first author was partially supported
by NSF grant DMS-1205597.
The second author was partially supported by
RFBR grant 14-01-00062.
The third author was partially supported
by NSF grant DMS-1515187.
}

\keywords{G-equation, small controls, incompressible flow, reachability}

\subjclass[2010]{34H05, 49L20}


\maketitle

\begin{center}
{\it Dedicated to Yu.D.Burago  on the occasion of his 80th birthday.}
\end{center}

\begin{abstract}
As avid anglers we were always interested in the survival chances of fish in turbulent oceans. 
This paper addresses this question mathematically.
We show that a fish with bounded aquatic locomotion speed 
can reach any point in the ocean
if the fluid velocity is incompressible, bounded, and has small mean drift.
\end{abstract}

\section{Introduction}

Suppose  a locally Lipschitz vector field  $V(x)$ and a set  of bounded controls 
\[
\mathcal{A}_t = \{ \alpha \in L^\infty([0,t];\R^d) \;:\; \| \alpha \|_\infty \leq 1 \}
\]
are given. For each $x \in \R^d$ and a control $\alpha \in \mathcal{A}_t$ 
the function $X^{\alpha}_x(s)$ is defined as the unique solution of
\be
\frac{d}{ds} X^{\alpha}_x(s) = V(X^{\alpha}_x(s)) 
+ \alpha(s), \quad s \in [0,t], \quad \quad X^{\alpha}_x(0) = x. \label{controlode1}
\ee
The 
 {\it travel time} from  $x \in \R^d$ to  $y \in \R^d$ is how long it takes to reach $y$ from $x$ with optimal control: 
\be
\tau(x,y) = \inf \{ t \geq 0\;|\; X^\alpha_x(t) = y, \;\;\;\text{for some}\; \alpha \in \mathcal{A}_t \}. \label{taudef1}
\ee
If $\tau(x,y)$ is finite for any $x,y \in \R^d$, then $\tau(x,y)$
can be viewed as a ``non-symmetric metric'' on $\R^d$. 
For us the value of the control $\alpha(s)$ is precisely the strength of 
aquatic locomotion of fish at time $s$, and $V(x)$ is the velocity field of the ambient ocean. 
The finiteness of travel time between any $x$ and $y$ guarantees fish can travel anywhere it wants. Naturally, 
fish can rely on its strength in calm water, that is when $\| V \|_{L^\infty} < 1$, 
but what happens in violent storms? 
Here we intend to clarify the situation in this less obvious regime $\| V \|_{L^\infty}\gg 1$. 

There are two natural obstructions to the finiteness of  $\tau(x,y)$. 
The first one is compressibility. If $V$ has a sink at $x_0$, then, 
depending on the strength of the flow near  $x_0$, we may 
have $\tau(x_0,y) = \infty$  for all $y \neq x_0$. 
The second obstruction is a strong mean drift of $V$.   Indeed, if $V$ is simply a large constant vector, 
then the travel time $\tau(x,y) = \infty$ for many $x$ and $y$. For example, if $V=(v_1, 0, \dots, 0)$, $v_1 \gg 1$,
 then $\tau(x,y) = \infty$
for points $x$ and $y$ if the first coordinate of $x$ is larger than the first coordinate 
of~$y$.  In order to rule out these two natural obstructions, we assume $V$ is incompressible
\begin{equation}\label{tres}
\operatorname{div} V =0,
\end{equation}
and it has small mean drift: 
\begin{equation}\label{dos}
\lim_{L\to\infty} \sup_{x\in\R^d}\left\| \frac{1}{L^d} \int_{[0,L]^d} V(x+y) \, dy \right\| = 0 .
\end{equation}
We also assume that $V$ is bounded:
\begin{equation}\label{uno}
\norm{V}_{L^\infty(\R^d)}  \le \MM < \infty .
\end{equation}
Our main result is the following:

\begin{theorem}\label{main}
Suppose $V$ is locally Lipschitz, bounded~\eqref{uno}, 
incompressible~\eqref{tres}, and has small mean drift~\eqref{dos}.
Then $\tau(x,y)<\infty$ for all $x,y\in\R^d$.
\end{theorem}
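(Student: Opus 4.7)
The plan is to show that for every $x \in \R^d$ the reachable set $R(x) := \{y : \tau(x,y) < \infty\}$ equals $\R^d$. A short variational argument, perturbing any admissible control at its final instant, shows that $R(x)$ is open; the analogous backward reachable set $R^-(y) := \{z : \tau(z,y)<\infty\}$ is open for the same reason. Since $\tau(x,y)<\infty$ whenever $R(x)\cap R^-(y)\neq\emptyset$, it suffices to show that the forward- and backward-reachable sets from any two points eventually meet.

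The main step is an \emph{averaging lemma}: there exist $\varepsilon \in (0,1)$ and a scale $L_0 = L_0(\varepsilon, \MM)$ such that for every cube $Q$ of side $L \geq L_0$ the fish, starting from any point of $Q$, reaches in finite time a set of positive measure near the centre of $Q$. Iterating along a chain of overlapping cubes of side $L_0$ then forces $R(x)$ to cover arbitrarily large regions, and similarly for $R^-(y)$, so the two must intersect. To prove the averaging lemma I would fix a unit direction $e$ and a cube $Q$ of side $L$. By \eqref{dos} the spatial mean $|Q|^{-1}\int_Q V\cdot e\, dz$ is less than $\varepsilon$, while by \eqref{tres} the flow $\Phi_s$ of $V$ preserves Lebesgue measure. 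The identity
\begin{equation*}
\int_Q (\Phi_T z - z)\cdot e\, dz \;=\; \int_0^T \int_Q V(\Phi_s z)\cdot e\, dz\, ds,
\end{equation*}
combined with the fact that $\Phi_s(Q)$ differs from $Q$ only in a boundary layer of width $\MM s$, controls the right-hand side by $\varepsilon|Q|T$ up to a lower-order correction valid when $T \ll L/\MM$. A Chebyshev step then shows that for most starting points $z \in Q$ the time-averaged drift of $V\cdot e$ along $\Phi_s(z)$ has absolute value less than $\varepsilon$; on such a ``good'' orbit the fish, applying unit control in direction $e$, gains net speed at least $1-\varepsilon>0$.

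The hardest step is to convert this ``most-orbits'' statement into a usable \emph{constructive} strategy between specific endpoints. The fish is committed to whichever streamline it rides unless it expends control to switch, so I would interleave long drift phases along $V$ with short corrective phases using small controls to jump between nearby streamlines, relying on the recurrence guaranteed by volume preservation to ensure a good streamline is always close at hand. Incompressibility is indispensable here: without it, sinks of $V$ could trap the fish irrespective of the mean-drift condition, whereas \eqref{tres} ensures that whenever one orbit behaves poorly a nearby orbit of comparable average quality is available. Combining this local manoeuvring with the macroscopic progress of the averaging lemma should produce the required finite-time trajectory.
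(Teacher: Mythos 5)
The paper's proof is entirely non-constructive: it assumes $\mathcal R_x\neq\R^d$, shows the boundary $\pd\mathcal R_x$ is a Lipschitz hypersurface through which the flux is bounded below by the surface area (Lemmas~\ref{lip-boundary} and~\ref{big-flux}), and then derives a contradiction by combining an isoperimetric estimate with the upper flux bounds of Lemmas~\ref{lemma-flux-iso} and~\ref{lemma-flux-mixed} to force $|S_t|$ to grow both exponentially and polynomially. Your plan is genuinely different in spirit --- you are attempting a constructive, ergodic-averaging argument --- and it would be a real contribution if it could be made to work. But as written it has serious gaps.

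The averaging identity you write applies to the \emph{uncontrolled} flow $\Phi_s$ of $V$. The moment the fish applies control in direction $e$, it leaves the streamline: the controlled trajectory $X(s)$ satisfies $\dot X = V(X)+e$, and the time average of $V(X(s))\cdot e$ that you actually need has no a priori relation to the time average of $V(\Phi_s z)\cdot e$ that the lemma bounds, because $X$ and $\Phi_\cdot z$ can separate at a rate governed only by the \emph{local} Lipschitz constant of $V$, which Theorem~\ref{main} does not assume to be uniform. (One could observe that $V+e$ is also divergence-free and redo the averaging for the controlled flow $\Psi_s$, but you do not make that observation, and even then the cancellation is against $\Psi_s(Q)$, not $Q$.) A further scale mismatch: your estimate is only valid for $T\ll L/\MM$, over which the guaranteed progress in direction $e$ is $\lesssim L/\MM \ll L$ when $\MM\gg 1$, so a single cube does not get you across; you must chain.

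The chaining is exactly where the argument breaks, and you yourself flag it as ``the hardest step'' without resolving it. The Chebyshev step gives you a \emph{set of positive proportion} of good starting points in $Q$, but after the fish moves for time $T$ it lands at a specific point, which may well be a bad one, and nothing in the volume-preservation argument tells you that the fish can reach a nearby good point with its $O(1)$-speed control while being swept around at speed $\MM$. ``Interleaving drift phases with short corrective phases'' is a heuristic, not a proof; you would need a quantitative recurrence statement about the controlled dynamics that is not established here. Relatedly, the final goal is not merely ``cover arbitrarily large regions'' but to hit a prescribed target $y$; the paper sidesteps this entire difficulty by never attempting to exhibit a trajectory (and explicitly remarks that its proof is non-constructive). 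For these reasons the proposal, while interesting, does not yet prove Theorem~\ref{main}.
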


If the vector field is globally Lipschitz, we have the following
estimate on travel times.

\begin{thm}\label{mainl} 
Suppose $V$ is incompressible~\eqref{tres},~has small mean drift~\eqref{dos}, 
and $V \in Lip(K)$ i.e.
\begin{equation}\label{lip}
\| V(x) - V(y) \| \leq K |x-y|, \forall x,~y \in \R^d .
\end{equation}
Then the following travel-time estimate
\be\label{geo}
\tau(x,y) \leq C_1 \| x-y \| +C_2,
\ee
holds with some $C_1$ and $C_2$ that depend on $V$ only.
\end{thm}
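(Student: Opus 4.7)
\medskip
\noindent\textbf{Plan.} First observe that $V$ is automatically bounded: using~\eqref{dos}, pick $L$ so that every $L$-cube average of $V$ has norm at most $1$. For any $x$, writing $A_x$ for the mean of $V$ over $x+[0,L]^d$, the Lipschitz bound gives $\|V(x)-A_x\|\le KL\sqrt d$, hence $\|V\|_{L^\infty}\le 1+KL\sqrt d$. So Theorem~\ref{main} applies. Set
\[
T_r := \sup_{x,y\in\R^d,\;\|x-y\|\le r}\tau(x,y).
\]
If $T_r<\infty$ for some $r>0$, a triangle/hopping argument (chain $x$ to $y$ by $\lceil\|x-y\|/r\rceil$ waypoints at mutual distance $\le r$) yields $\tau(x,y)\le T_r(\|x-y\|/r+1)$, which is exactly~\eqref{geo}. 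It therefore suffices to prove $T_1<\infty$.

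\medskip
\noindent\textbf{Compactness reduction.} Suppose $T_1=\infty$, and choose pairs $(x_n,y_n)$ with $\|x_n-y_n\|\le 1$ and $\tau(x_n,y_n)\to\infty$. Translate by setting $\tilde V_n(\cdot):=V(\cdot+x_n)$ and $\tilde y_n:=y_n-x_n$, so the travel time in the $\tilde V_n$-system from $0$ to $\tilde y_n$ still tends to infinity, while $\|\tilde y_n\|\le 1$. The family $\{\tilde V_n\}$ is uniformly $K$-Lipschitz and uniformly bounded, so by Arzel\`a--Ascoli and diagonal extraction a subsequence converges in $C_{\mathrm{loc}}(\R^d)$ to a $K$-Lipschitz field $V_\infty$, which inherits incompressibility~\eqref{tres} (distributionally) and small mean drift (since~\eqref{dos} is translation-invariant the same quantitative bound holds for each $\tilde V_n$, and the integral averages pass to the local-uniform limit). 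Extract further so $\tilde y_n\to\tilde y_\infty\in\overline{B(0,1)}$. By Theorem~\ref{main}, there exist $T_\infty<\infty$ and $\alpha\in\mathcal{A}_{T_\infty}$ with $X^\alpha_0(T_\infty)=\tilde y_\infty$ in the $V_\infty$-system. Running the same $\alpha$ in the $\tilde V_n$-system and invoking Gr\"onwall with Lipschitz constant $K$, the endpoint $z_n:=X^{\alpha,\tilde V_n}_0(T_\infty)$ converges to $\tilde y_\infty$, so the residual gap $\eta_n:=\tilde y_n-z_n$ tends to $0$.

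\medskip
\noindent\textbf{Closing the residual gap (main obstacle).} To reach a contradiction we must modify $\alpha$ to hit $\tilde y_n$ \emph{exactly} in the $\tilde V_n$-system without inflating travel time beyond a uniform bound. The plan is to exploit openness of the endpoint map $F_n:\alpha\mapsto X^{\alpha,\tilde V_n}_0(T_\infty)$: its Fr\'echet derivative $\delta\alpha\mapsto\int_0^{T_\infty}\Phi_n(T_\infty,s)\,\delta\alpha(s)\,ds$, where $\Phi_n$ is the fundamental matrix of the linearized flow bounded in norm by $e^{KT_\infty}$, is surjective onto $\R^d$ with a bounded right inverse. A quantitative inverse function theorem then produces $\tilde\alpha_n$ with $\|\tilde\alpha_n-\alpha\|_\infty=O(\|\eta_n\|)\to 0$ satisfying $F_n(\tilde\alpha_n)=\tilde y_n$. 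Admissibility demands $\|\tilde\alpha_n\|_\infty\le 1$, which I would arrange by first replacing $\alpha$ with a strictly interior control of norm $\le 1-\delta_0$ at the cost of a bounded, one-time increase of $T_\infty$; since $\|\eta_n\|\to 0$, the perturbation eventually fits inside the slack $\delta_0$. This gives $\tau^{\tilde V_n}(0,\tilde y_n)\le T_\infty+O(1)$, contradicting $\tau^{\tilde V_n}(0,\tilde y_n)\to\infty$. The main obstacle is precisely this last step: producing a strictly interior admissible control with only a bounded increase in travel time is a delicate controllability issue that may require either a more careful openness analysis in the $V_\infty$-system or extracting additional geometric information from the proof of Theorem~\ref{main} itself.
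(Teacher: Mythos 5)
Your reduction to $T_1<\infty$ via hopping and the compactness step (translate, Arzel\`a--Ascoli, extract a limiting field) match the paper's opening moves. But the route then diverges, and the divergence is where your gap appears. You apply Theorem~\ref{main} to the single limiting pair $(0,\tilde y_\infty)$, which gives one trajectory in the $V_\infty$ system and leaves you a residual $\eta_n=\tilde y_n - z_n$ to close. Closing it requires perturbing $\alpha$ while keeping it admissible, and you correctly flag that producing a ``strictly interior'' control with only a bounded extra cost of time is not obviously available. This is not a small technicality: the endpoint map's openness onto $\R^d$ is cheap, but admissibility is exactly what the supremum-norm constraint makes hard, and the quantitative inverse-function bound alone does not supply the slack you need.

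The paper sidesteps the entire residual-gap problem with a different and cleaner device. First, rather than applying Theorem~\ref{main} to the single pair $(0,\tilde y_\infty)$, it invokes Corollary~\ref{add} (uniform travel-time bound on compact sets) for the limit field $V_0$, so one gets, for \emph{each} $n$, a trajectory in the $V_0$ system from $0$ to $y_n - x_n$ \emph{exactly}, with time $\le\tau_0$ uniformly; there is nothing to chase in the endpoint. Second, and more importantly, it reserves half the control budget: Theorem~\ref{main} and Corollary~\ref{add} are applied to $V_0$ with controls bounded by $1/2$ (this is where the definition \eqref{halftau} and the rescaling remark come in --- one applies the theorem to $2V_0$, which is still bounded, incompressible, and of small mean drift). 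Since the $V_0$-trajectories are confined to a fixed ball $B_R(0)$ and $\|V_{n_i}-V_0\|<\tfrac12$ there for large $i$, the unused half of the control absorbs the field discrepancy: a $\tfrac12$-bounded-control trajectory for $V_0$ is, verbatim, a $1$-bounded-control trajectory for $V_{n_i}$. This gives $\tau(x_{n_i},y_{n_i})\le\tau_0$ directly, with no perturbation argument at all. In short, the half-control trick is the missing idea in your proposal, and it resolves precisely the obstacle you identified as the main difficulty.
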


In the two-dimensional case the estimate \eqref{geo} has been obtained in~\cite{NN}
under slightly different assumptions on $V$.
This estimate has been used in~\cite{NN} to characterize effective behavior of solutions
of the random two-dimensional G-equation, a certain Hamilton-Jacobi partial differential 
equation that arises in modeling propagation of flame fronts in turbulent 
media~\cite{Pet,Wlms85}. Subsequently effective behavior of solutions of the random G-equation was characterized
in~\cite{CS} for any dimension. The approach in~\cite{CS} is different from~\cite{NN}. 
In particular, it relies on ergodic properties of the flow $V$ instead of its geometry. 

Throughout the rest of the paper we assume that $d\ge 3$. The case of $d=2$ easily follows
by adding an auxiliary coordinate.

\begin{rmk}
We do not know whether the Lipschitz condition \eqref{lip} in Theorem \ref{mainl} is necessary.
More precisely, we do not know if it be replaced by the conditions of local Lipschitz continuity and boundedness \eqref{uno}.
Note that the conditions of Lipschitz continuity \eqref{lip} and small mean drift \eqref{dos}
imply boundedness \eqref{uno}.

One possible approach is to sacrifice a part of the available control and use this part
to push $V$ into some compact class of vector fields.  Then we can use
an argument similar to that in the proof of Theorem \ref{mainl}.
\end{rmk}


\begin{rmk}
One may wonder whether uniform convergence is essential in~\eqref{dos}. 
That is, we ask if Theorem \ref{main} remains valid if we replace \eqref{dos}
by the following assumption:
\begin{equation}\label{dos2}
\lim_{L \to \infty} \left\| \frac{1}{(2L)^d} \int_{[-L,L]^d} V(x+y) \, dy \right\| = 0
\end{equation}
for every $x\in\R^d$.
The answer is no.
A counter-example exists already in two dimensions.
Consider the vector field $V$ on $\R^2$ given by
$$
 V(x_1,x_2) =  10\cdot ( sign(x_2), sign(x_1)) .
$$
It is discontinuous, but this can be fixed by convolution
with a suitable mollifier. The resulting field is bounded,
incompressible, and it satisfies \eqref{dos2}.
On the other hand, the fish cannot leave the region
$\{x_1\ge 1, x_2\ge 1\}$.
\end{rmk}

\begin{rmk}
As could be seen from the proof, the small mean drift assumption \eqref{dos}
in Theorem~\ref{main}
can be replaced by a more technical quantitative assumption. Namely it suffices
to assume there is $L_0>0$ such that
$$
\left\| \frac{1}{L^d} \int_{[0,L]^d} V(x+y) \, dy \right\| \le \ep =\ep(d,\MM)
$$
for all $L>L_0$.
Similarly, the parameters determining the constants in Theorem \ref{mainl} are
the uniform bound
$\MM$, the Lipschitz constant $K$, and the above mentioned $L_0$.
\end{rmk}

\subsection*{Motivating ideas of the proof}
Let us start with some motivations. These are not proofs but they indicate
what brought us to a relatively technical argument presented below. 

A feasible counter-example to Theorem \ref{main} would look like this.
Consider a hypersurface $S\subset\R^d$ dividing the space into two regions
$\mathcal R$ and $\mathcal R'$.
Suppose that for every $x\in S$, the vector $V(x)$ 
is directed inward $\mathcal R$ and its normal component 
with respect to $S$ is greater than~1. Then, if the fish starts
at a point in $\mathcal R$, it can never cross $S$ outwards
and hence cannot leave $\mathcal R$.

In fact, {\em any} counter-example should look like this. 
Indeed, let $\mathcal R$ be the set of all points that our fish
can reach from its initial position. Then a simple technical
argument (see Section \ref{sec:localgeom}) shows that the boundary
$\pd\mathcal R$ is locally the graph of a Lipschitz function.
And since the fish cannot leave $\mathcal R$, the flow at the
boundary is directed inward $\mathcal R$ with the normal
component bounded away from zero.
With a little help of the Geometric Measure Theory, one sees
that Lipschitz surfaces are as good as smooth ones for our purposes.
Thus Theorem \ref{main} is equivalent to non-existence of
a hypersurface $S$ with the above properties.

Now let us consider some simple cases.
First of all, if the reachable set $\mathcal R$
is compact, then the contradiction is obvious.
Since the flow field at the boundary points strictly inwards,
the flux through the boundary is nonzero,
and this contradicts the incompressibility condition.

A more interesting situation to consider is the situation when $\mathcal R$ is a tube
(a neighborhood of a straight line) and there is a parallel tube
with opposite flow to cancel the mean drift.
Since the normal component of the flow field on the boundary
is bounded away from zero, the total flux though the boundary
is unbounded. 
This and incompressibility imply that $V$ is unbounded,
contrary to our assumptions.

Things are however more complicated since {\em a priori} the tubes can branch or
widen or have more complicate structure. The main part of
our strategy is to show that this ``branching'' must be exponential and there is
not enough room for this in the Euclidean space.

\begin{rmk}
Our proof is not constructive. It does not give us an actual trajectory 
that the fish can follow to reach a given point. The optimal trajectory 
can be found by studying how the reachable set evolves in time. This could be done
by solving the  G-equation, 
since a certain spatial level set of its solution at a fixed time $t$ is precisely the boundary of the
set  of all points that our fish
can reach before $t$.
\end{rmk}

Our motivating ideas are formulated in a very informal way so far. Let us proceed with
actual proofs of the theorems.

\section{Local geometry of reachable sets}
\label{sec:localgeom}

Throughout the rest of the paper we assume 
that $V$ is a vector field in $\R^d$ satisfying 
the assumptions of Theorem \ref{main}.
The letters $C$ and $c$ denote various positive constants
depending on~$V$. The same letter $C$ may
denote different constants, even within one formula.
We fix the notation $\MM$ for the bound on $\|V\|$,
see \eqref{uno}.

We denote by $\overline S$ the closure of a set $S\subset\R^d$.
We refer to \cite{Federer} for basic properties of rectifiable sets in $\R^d$.
For a $k$-dimensional rectifiable 
set $S\subset\R^d$, $0\le k\le d$, we denote by $|S|$
its $k$-dimensional volume.
For a $(d-1)$-rectifiable co-oriented set $S$
we denote by $\flux(V,S)$ the flux of $V$ through $S$.
(Recall that co-orientation of a hypersurface is a choice of one of the two normal directions.)

In this section we establish basic properties of the set of points
reachable from a fixed point $x\in\R^d$. For technical reasons,
we prefer to work with open reachable sets, defined as follows.

\begin{defin}\label{def-reachable-set}
For $x\in\R^d$ and $\tau>0$, we denote by $\mathcal R_x^\tau$
the set of points reachable from $x$ in positive time less than $\tau$
using controls strictly smaller than~1. That is,
$$
 \mathcal R_x^\tau = \{ y\in\R^d \mid y = X^t_\alpha(x) \text{ for some  $t\in(0,\tau)$ 
 and $\alpha\in\mathcal A_t$ such that $\|\alpha\|_\infty<1$}  \}
$$
We define the \textit{reachable set} $\mathcal R_x$ of $x$
by $\mathcal R_x = \bigcup_{\tau>0} \mathcal R_x^\tau$.
\end{defin}

Clearly $\mathcal R_x^\tau$ and $\mathcal R_x$ are open sets.
We are going to show that the boundary $\pd\mathcal R_x$ enjoys some regularity properties.


%

\begin{defin} Given a point $y\in\R^d$, a vector $v\in\R^d$, and a parameter
$\lambda\in(0,1)$ we define an open cone
\[
C^{\lambda}_{y}(v) = \{ y + tw \mid t>0, \ w \in  \mathbb{R}^d, \
\|w-v\| < \lambda \}.
\]
\end{defin}

If $\|v\|>\lambda$ then $C^{\lambda}_{y}(v)$ is an open ``round cone'' with its apex at~$y$.
If $\|v\|<\lambda$ then $C^{\lambda}_{y}(v)=\R^d$.
If $\|v\|=\lambda$ then $C^{\lambda}_{y}(v)$ is an open half-space.
For fixed $v$ and $\lambda$, the cones $C^{\lambda}_{y_1}(v)$ and $C^{\lambda}_{y_2}(v)$
are parallel translates of each other.


\begin{lem}\label{conecond}
Let $\mathcal R=\mathcal R_x$ and $\lambda\in(0,1)$. 
Then for every $y_0\in\pd{\mathcal R}$ there exists a neighborhood $U$ of $y_0$
such that for every $y\in\overline{\mathcal R}\cap U$
one has $C^{\lambda}_{y}(v_0)\cap U\subset\mathcal R$ where $v_0=V(y_0)$.
\end{lem}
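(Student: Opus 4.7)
The plan is to exploit two facts: continuity of $V$ near $y_0$ allows us to use $v_0$ as a proxy for $V(z)$ for all $z$ in a small neighborhood; and the strict inequality $\lambda<1$ (together with the strict inequality $\|\alpha\|_\infty<1$ in Definition \ref{def-reachable-set}) leaves a uniform slack that we can spend on steering.

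First, I would choose $U=B(y_0,r)$ with $r$ small enough that $\|V(z)-v_0\|<(1-\lambda)/2$ for every $z\in U$, which is possible by continuity of $V$ at $y_0$. Note that $U$ is convex.

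Next, I would settle the main case $y\in\mathcal R\cap U$. Fix $z\in C^\lambda_y(v_0)\cap U$ and write $z=y+tw$ with $t>0$ and $\|w-v_0\|<\lambda$. Since $y\in\mathcal R$, there exists a control $\alpha_0$ with $\|\alpha_0\|_\infty<1$ steering $x$ to $y$ in some time $\tau$. Extend it to $[\tau,\tau+t]$ by the straight-line ansatz $X(\tau+s)=y+sw$ and $\alpha(\tau+s)=w-V(y+sw)$; this indeed solves \eqref{controlode1}, lands at $z$ at time $\tau+t$, and stays in the segment $[y,z]\subset U$ by convexity. Consequently
\begin{equation*}
\|\alpha(\tau+s)\| \le \|w-v_0\| + \|v_0-V(y+sw)\| < \lambda + \tfrac{1-\lambda}{2} = \tfrac{1+\lambda}{2} < 1,
\end{equation*}
so the concatenated control is admissible and $z\in\mathcal R$.

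Finally, for $y\in\pd\mathcal R\cap U$ I would approximate $y$ by $y_n\in\mathcal R$, $y_n\to y$. For any fixed $z=y+tw\in C^\lambda_y(v_0)\cap U$, the vectors $w_n:=w+(y-y_n)/t$ satisfy $w_n\to w$, hence $\|w_n-v_0\|<\lambda$ for large $n$; since also $z=y_n+tw_n$, $z\in U$, and $y_n\in U$ for large $n$, the previous case applied to $y_n$ yields $z\in\mathcal R$.

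The lemma is not deep, and I do not expect a serious obstacle; the only care needed is to keep every inequality strict (both in the cone opening and in the control bound), which is exactly what the buffer $(1-\lambda)/2$ provides, and to handle boundary points $y\in\pd\mathcal R$ by the approximation step above rather than trying to start a trajectory from $y$ itself.
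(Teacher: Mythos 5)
Your proof is correct and follows essentially the same approach as the paper's: pick a small convex neighborhood where $V$ is close to $v_0$, concatenate a control reaching $y$ with a straight-line control to reach points of the cone, and handle boundary points by approximation from inside $\mathcal R$. The only cosmetic difference is your use of the cushion $(1-\lambda)/2$ where the paper uses $1-\lambda$, and your explicit formula $w_n = w + (y-y_n)/t$ for the approximation step; both are just more detailed renderings of the same argument.
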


\begin{proof} 
Let $U\ni y_0$ be a convex neighborhood so small
that $\|V(y)-v_0\|< 1-\lambda$ for all $y\in U$.
First consider $y\in \mathcal R\cap U$.
Starting at $x$ and using controls strictly bounded by~1, 
our fish can reach $y$ and then follow
any path $t\mapsto y+tw$, $\|w-v_0\|<\lambda$, until it leaves $U$.
Hence $C^{\lambda}_y(v_0)\cap U\subset\mathcal R$ 
for any $y\in \mathcal R\cap U$.

If $y\in \pd{\mathcal R}\cap U$, the cone $C^{\lambda}_{y}(v_0)$
is the limit of cones $C^{\lambda}_{y'}(v_0)$, $y'\in \mathcal R\cap U$,
$y'\to y$.
More precisely, for every $z\in C^{\lambda}_{y}(v_0)$
we have $z\in C^{\lambda}_{y'}(v_0)$ for all $y'\in\mathcal R$ sufficiently close to $y$.
Since the desired property is already verified for $y'\in\mathcal R$,
it follows that it holds for $y\in\pd\mathcal R$.
\end{proof}


\begin{lem}\label{lip-boundary}
$\pd\mathcal R_x$ is a locally Lipschitz hypersurface, and 
$\mathcal R_x$ locally lies to one side of $\pd\mathcal R_x$.
\end{lem}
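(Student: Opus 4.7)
My plan is to upgrade the one-sided cone condition of Lemma \ref{conecond} to a two-sided condition and then read off the Lipschitz graph structure of $\pd\mathcal R_x$ in an adapted coordinate system.

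Fix $y_0\in\pd\mathcal R_x$ and set $v_0=V(y_0)$. First I would observe that $\|v_0\|\ge 1$: otherwise one could pick $\lambda\in(\|v_0\|,1)$, in which case $C^{\lambda}_{y_0}(v_0)=\R^d$ and Lemma \ref{conecond} would give $U\subset\mathcal R_x$, contradicting $y_0\in\pd\mathcal R_x$. Thus for any $\lambda\in(0,1)$ the cone $C^{\lambda}_{y_0}(v_0)$ is a genuine round cone about the axis $v_0$. Fix such a $\lambda$ and let $U$ be the neighborhood from Lemma \ref{conecond}. I would then establish the complementary (exterior) cone condition: for every $y\in\mathcal R_x^c\cap U$,
\[
C^{\lambda}_{y}(-v_0)\cap U\subset\mathcal R_x^c.
\]
If instead some $z\in C^{\lambda}_{y}(-v_0)\cap U$ belonged to $\mathcal R_x$, writing $z=y-tw$ with $\|w-v_0\|<\lambda$ gives $y\in C^{\lambda}_{z}(v_0)\cap U$, and Lemma \ref{conecond} applied at $z$ forces $y\in\mathcal R_x$, a contradiction.

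Next, introduce coordinates so that $e_d=v_0/\|v_0\|$. In these coordinates $C^{\lambda}_0(v_0)$ is the solid open cone $\{(y',y_d):y_d>L\|y'\|\}$ with $L=\sqrt{\|v_0\|^2-\lambda^2}/\lambda>0$. Pick a cylindrical neighborhood $V=B_r(y_0')\times(y_{0,d}-h,y_{0,d}+h)\subset U$ with $r$ small enough that $Lr<h$. Applying the interior cone condition at $y_0$ shows $(y',y_{0,d}+h-\ep)\in\mathcal R_x$ for every $y'\in B_r(y_0')$ and all sufficiently small $\ep>0$, and symmetrically the exterior cone condition at $y_0$ shows $(y',y_{0,d}-h+\ep)\in\mathcal R_x^c$. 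Moreover, applying the interior cone condition at each point of $\mathcal R_x\cap V$ (going straight up the axis $e_d$) shows that each vertical fiber of $\mathcal R_x\cap V$ is upward-closed inside $V$, and by the exterior cone each vertical fiber of $\mathcal R_x^c\cap V$ is downward-closed. Combining these observations, each fiber of $V$ meets $\pd\mathcal R_x$ in exactly one point $(y',f(y'))$; hence $\pd\mathcal R_x\cap V$ is the graph of $f$, and $\mathcal R_x\cap V=\{(y',y_d)\in V:y_d>f(y')\}$, which yields the ``one-sided'' claim.

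For Lipschitz continuity of $f$, consider two boundary points $(y_1',f(y_1'))$ and $(y_2',f(y_2'))$ in $V$. If $f(y_2')-f(y_1')>L\|y_2'-y_1'\|$, then $(y_2',f(y_2'))$ would lie in the interior cone $C^{\lambda}_{(y_1',f(y_1'))}(v_0)\cap V$, hence in the open set $\mathcal R_x$; but $(y_2',f(y_2'))\in\pd\mathcal R_x$ and $\mathcal R_x\cap\pd\mathcal R_x=\emptyset$, a contradiction. Swapping the two points yields the reverse inequality, so $|f(y_2')-f(y_1')|\le L\|y_2'-y_1'\|$. The main point requiring care is the derivation of the exterior cone condition, since Lemma \ref{conecond} itself is stated only for points of $\overline{\mathcal R_x}$; once that step is in hand, the remainder is the standard conversion of two-sided cone conditions into a Lipschitz graph.
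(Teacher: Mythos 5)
Your proof is correct and takes essentially the same approach as the paper: adapted coordinates with $e_d$ parallel to $v_0$ together with the cone condition of Lemma~\ref{conecond} (applied at all points of $\overline{\mathcal R_x}\cap U$, not just $y_0$) give the Lipschitz graph structure and the one-sidedness of $\mathcal R_x$. You usefully flesh out two points the paper leaves implicit --- the observation that $\|v_0\|\ge 1$ (which keeps the cone genuine and the slope constant real) and the complementary exterior cone condition for $\mathcal R_x^c$ obtained by contraposition --- but the underlying argument is the same.
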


\begin{proof}
Let $\mathcal R=\mathcal R_x$ and $y_0\in\pd\mathcal R$.
Fix $\lambda=\frac12$ and let $U$ be a neighborhood of $y_0$
constructed in Lemma \ref{conecond}.
Since $y_0$ is a boundary point of $\mathcal R$,
Lemma \ref{conecond} implies that $C^\lambda_{y}(v_0)\ne\R^d$
for any~$y$.
Choose a Cartesian coordinate system in $\R^d$ such that
the vector $v_0=V(y_0)$ is nonnegatively proportional to the last coordinate vector.
In these coordinates, every cone $C^\lambda_y(v_0)$ is 
the epigraph of the function $F_y\colon\R^{d-1}\to\R$
given by $F_y(u) = y_n +C\,\|u-u_0\|$
where $C=\sqrt{\|v\|^2-1}$, $y_n$ in the last coordinate of $y$,
and $u_0$ is the projection of $y$ to the first coordinate hyperplane.
This fact and Lemma \ref{conecond} imply that $\pd\mathcal R\cap U$
is the graph of a $C$-Lipschitz function and the set $R\cap U$ 
lies above this graph. The lemma follows.
\end{proof}

Lemma \ref{lip-boundary} implies that
$\pd\mathcal R_x$ is a $(d-1)$-dimensional locally rectifiable set
and it has a tangent hyperplane at almost every point.
We equip $\pd\mathcal R_x$ with a co-orientation determined by the choice 
of the normal pointing inwards $\mathcal R_x$.

\begin{lem}\label{big-flux}
For every measurable set $S\subset\pd\mathcal R_x$,
$\flux(V,S) \ge |S|$.
\end{lem}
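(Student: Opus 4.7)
The plan is to deduce Lemma \ref{big-flux} from the pointwise inequality $V(y)\cdot n(y)\ge 1$ at $\mathcal H^{d-1}$-almost every $y\in\pd\mathcal R_x$, where $n(y)$ denotes the inward unit normal. Once this is established, the flux bound is immediate by integration:
\[
\flux(V,S)=\int_S V(y)\cdot n(y)\,d\mathcal H^{d-1}(y)\ge\int_S 1\,d\mathcal H^{d-1}(y)=|S|.
\]

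To obtain the pointwise bound, I will first restrict attention to a regular point $y\in\pd\mathcal R_x$, that is, a point where the locally Lipschitz hypersurface of Lemma \ref{lip-boundary} has a well-defined tangent hyperplane; by Rademacher's theorem such points carry full $\mathcal H^{d-1}$-measure. At any such $y$, the one-sidedness of $\mathcal R_x$ from Lemma \ref{lip-boundary} sharpens to the infinitesimal statement that for every $\epsilon>0$ there is a neighborhood $U\ni y$ with
\[
 \mathcal R_x\cap U\subset\{z:(z-y)\cdot n(y)\ge -\epsilon\,\|z-y\|\}.
\]

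Next, for any fixed $\lambda\in(0,1)$, I will apply Lemma \ref{conecond} at $y$ to obtain, after possibly shrinking $U$, the inclusion $C^\lambda_y(V(y))\cap U\subset\mathcal R_x$. Comparing this inclusion with the previous display along rays $y+tw$ as $t\to 0^+$ for $\|w-V(y)\|<\lambda$, I conclude that $w\cdot n(y)\ge -\epsilon\|w\|$, and since $\epsilon>0$ is arbitrary, every $w$ in the open ball $B(V(y),\lambda)$ satisfies $w\cdot n(y)\ge 0$. Equivalently, the $\lambda$-ball around $V(y)$ lies in the closed inward half-space, which is exactly $V(y)\cdot n(y)\ge\lambda$. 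Letting $\lambda\nearrow 1$ yields the desired bound. The main technical point is the passage from the cone inclusion to the pointwise normal-component bound; this is a routine consequence of the approximate tangent-plane property of Lipschitz graphs at points of differentiability, so beyond Lemma \ref{conecond} and Rademacher's theorem, no new input is required.
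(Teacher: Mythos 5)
Your proof is correct and takes essentially the same approach as the paper: both combine the cone inclusion from Lemma \ref{conecond} with the tangent-hyperplane property of the Lipschitz boundary at a.e.\ point to deduce $\langle V(y),n(y)\rangle\ge 1$, and then integrate. The only cosmetic difference is that the paper argues by contradiction (assuming $\langle v_0,n\rangle<1$ and exhibiting a tangent direction inside the cone) whereas you argue directly via the infinitesimal one-sidedness and let $\lambda\nearrow 1$.
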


\begin{proof}
It suffices to verify that, for every $y_0\in\pd\mathcal R_x$ 
such that $\pd\mathcal R_x$ has a tangent hyperplane at~$y_0$,
we have $\langle v_0,n\rangle\ge 1$
where $v_0=V(y_0)$ and $n$ is the inner normal to $\pd\mathcal R_x$ at~$y_0$.
Suppose the contrary and fix $\lambda$ between $\langle v_0,n\rangle$ and 1.
By Lemma \ref{conecond}, $\mathcal R_x$ contains a set
$C^\lambda_{y_0}(v_0)\cap U$ where $U$ is a neighborhood of~$y_0$.
The cone $C^\lambda_{y_0}(v_0)$ contains the ray $\{y_0+tw \mid t>0 \}$ where
$w=v_0-n\langle v_0,n\rangle$. 
The vector $w$ is orthogonal to $n$ and hence belongs to 
the tangent hyperplane to $\pd\mathcal R_x$ at $y_0$.
Thus the tangent hyperplane 
has a nonempty intersection with $C^\lambda_{y_0}(v_0)$, a contradiction.
\end{proof}

\section{Flux estimates}
\label{sec-flux}

First we show that the average flux trough a large $(d-1)$-dimensional cube is small.
This is the only place in the proof where we use the small mean drift assumption.

\begin{lem}\label{Aepsilon}
For every $\ep>0$ there exists $A_0>0$ such that the following holds.
Let $F$ be a $(d-1)$-dimensional cube with edge length $A>A_0$,
then
\begin{equation}\label{dosbissmall}
| \flux(V,F)| \le \epsilon A^{d-1}.
\end{equation}
\end{lem}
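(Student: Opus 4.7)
The plan is to bound the flux through $F$ by averaging over vertical shifts of $F$ and playing two competing effects against each other. The average flux over a thin slab perpendicular to $F$ is small by the small mean drift assumption, while the variation of the flux with vertical position is controlled by incompressibility together with the uniform bound $\MM$ on $V$. Balancing the two error terms should yield the required $\ep A^{d-1}$ estimate, at the price of requiring $A$ to be sufficiently large.

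After rotating and translating coordinates I may assume $F = F_0 \times \{0\}$ with $F_0 = [0,A]^{d-1}$, co-oriented by the last coordinate vector $e_d$. Set $\phi(s) := \flux(V, F_0 \times \{s\}) = \int_{F_0} V_d(y', s)\, dy'$; the goal becomes $|\phi(0)| \le C_d\, \ep\, A^{d-1}$. For a scale $L \in (L_0, A)$ to be chosen, I use the trivial decomposition
$$\phi(0) = \frac{1}{L}\int_0^L \phi(s)\, ds + \frac{1}{L}\int_0^L \bigl(\phi(0)-\phi(s)\bigr)\, ds.$$

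For the first term, $\frac{1}{L}\int_0^L \phi(s)\, ds = \frac{1}{L}\int_{F_0 \times [0,L]} V_d\, dx$, and I cover the slab $F_0 \times [0,L]$ by roughly $(A/L)^{d-1}$ axis-aligned $d$-cubes of edge length $L$. Since $L > L_0(\ep)$, the small mean drift hypothesis \eqref{dos} bounds each cube's contribution by $\ep L^d$, and summing yields a bound of order $\ep A^{d-1}$. For the second term, the divergence theorem applied to the box $F_0 \times [0,s]$, together with $\operatorname{div} V = 0$, gives
$$\phi(s) - \phi(0) = -\int_{\pd F_0 \times [0,s]} V \cdot \nu,$$
so $|\phi(s) - \phi(0)| \le 2(d-1)\MM A^{d-2} s$, and averaging in $s \in [0,L]$ contributes $O(\MM A^{d-2} L)$.

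Combining the two estimates, $|\phi(0)| \le C_d \ep A^{d-1} + C_d \MM A^{d-2} L$. The natural choice is $L \sim \ep A / \MM$, which balances the two terms at $O(\ep A^{d-1})$. The only real subtlety is a two-sided constraint on $L$: the variation bound forces $L \ll A$, while the tiling step requires $L > L_0$ so that the small mean drift estimate applies cube by cube. These are simultaneously possible exactly when $A > A_0 := C\, L_0 \MM / \ep$, which matches the quantitative form of the conclusion. Beyond carefully tracking these scales (and minor book-keeping when $A/L$ is not an integer, handled by allowing the tiling to slightly overcover), I do not anticipate a serious obstacle.
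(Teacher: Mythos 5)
Your proof is correct and uses essentially the same idea as the paper: compare $\flux(V,F)$ with the flux averaged over a thin slab of thickness $L$ on one side of $F$, bounding the slab average by tiling it with $d$-cubes and invoking the small mean drift hypothesis, and bounding the deviation by the divergence theorem (incompressibility plus $\|V\|\le\MM$ applied to the lateral faces). The only cosmetic differences are that the paper keeps $L\in[L_0,2L_0]$ fixed and extracts a single good slice $F_t$ by the Mean Value Theorem, whereas you average directly over $s\in[0,L]$ and choose $L\sim\ep A/\MM$ to balance the two error terms; both give $A_0$ of the same order $L_0\MM/\ep$.
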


\begin{proof}
We may assume that $F = \{0\} \times [0,A]^{d-1}$.
By the small mean drift property \eqref{dos},
there exists $L_0$ such that
\be\label{Lsmalldrift}
 \left\| \frac{1}{L^d} \int_{[0,L]^d} V(x+y) dy \right\| < \ep/2
\ee
for every $L>L_0$ and all $x\in\R^d$.
If $A>L_0$, then $A=mL$ where $m\in\Z$ and $L_0\le L\le 2L_0$.
Consider the layer
$$
 Q = [0,L]\times [0,A]^{d-1} .
$$
It can be partitioned into cubes with edge length $L$, hence \eqref{Lsmalldrift}
implies that
$$
  \frac{1}{LA^{d-1}} \left\|\int_Q V(x)\,dx \right\| < \ep/2.
$$
The Mean Value Theorem implies that there is $t\in[0,L]$ such that
a similar inequality holds for the 
slice $F_t = \{t\}\times [0,L]^{d-1}$ of $Q$ by the hyperplane $\{x_1=t\}$.
Namely
$$
 \frac{1}{LA^{d-1}} |\flux(V,F_t)| =  
  \frac{1}{A^{d-1}} \left|\int_{F_t} \langle V(x), {\bf e}_1 \rangle \, dx \right| < \ep/2,
$$
where the integration in the right-hand side is taken against 
the $(d-1)$-dimensional volume.
Let $Q_t=[0,t]\times[0,A]^{d-1}$. Incompressibility implies that the flux of $V$
through the boundary $\partial Q_t$ is zero. This boundary contains two ``large''
cubic faces $F$ and $F_t$, and the area of the remaining part is ``small'':
$$
 |\partial Q_t\setminus (F\cup F_t)| = (2d-2) t A^{d-2} \le C L A^{d-2}  .
$$
Hence
$$
 |\flux(V,F)| \le \frac\ep2 A^{d-1} +   C L \MM A^{d-2} . 
$$
Choosing $A$ large enough we obtain \eqref{dosbissmall}.
\end{proof}

Denote by $I_t$ the $n$-dimensional cube with edge length $2t$ centered at zero:
$I_t = [-t,t]^d$.
The following two lemmas concern estimates on flux of $V$ through 
subsets of $\pd I_t$.

\begin{lem}\label{lemma-flux-iso}
Let $D$ be a subset of $\partial I_t$ with a $(d-2)$-rectifiable boundary $\pd D$. 
Then
\be\label{flux-isoperimeter}
 |\flux(V,D)| \le C|\pd D|^{(d-1)/(d-2)}
\ee
where $C=C(d,\MM)$.
\end{lem}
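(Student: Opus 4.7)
The plan is to replace $D$ by a cheaper $(d-1)$-surface sharing the same boundary, and exploit incompressibility to show both carry the same flux. Since $V$ is bounded by $\MM$, the flux through the replacement surface is then bounded by $\MM$ times its area, and the isoperimetric inequality in $\R^d$ will do the rest.

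Concretely, I would first invoke the Federer--Fleming isoperimetric inequality for integral currents in $\R^d$: any $(d-2)$-rectifiable cycle $T$ in $\R^d$ bounds a $(d-1)$-rectifiable chain $S$ with
\[
 |S| \le c(d)\, |T|^{(d-1)/(d-2)}.
\]
Apply this to $T=\pd D$, oriented as the boundary of $D$ inside $\pd I_t$. This produces a $(d-1)$-rectifiable set $S\subset\R^d$ with $\pd S = \pd D$ as oriented chains and $|S|\le c(d)\,|\pd D|^{(d-1)/(d-2)}$.

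Next, glue $D$ and $-S$ along $\pd D$ into a closed $(d-1)$-cycle in $\R^d$. Since $H_{d-1}(\R^d)=0$, this cycle is the boundary of some $d$-dimensional chain $\Omega\subset\R^d$. Incompressibility and the divergence theorem (applied in the rectifiable/Lipschitz setting) give
\[
 \flux(V,D) - \flux(V,S) \;=\; \int_\Omega \operatorname{div} V \, dx \;=\; 0,
\]
hence $\flux(V,D) = \flux(V,S)$. A pointwise bound then yields
\[
 |\flux(V,D)| \;=\; |\flux(V,S)| \;\le\; \MM\, |S| \;\le\; \MM\, c(d)\, |\pd D|^{(d-1)/(d-2)},
\]
which is \eqref{flux-isoperimeter} with $C = \MM c(d)$.

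The only delicate point is that $D$, $\pd D$, and the filling $S$ are merely rectifiable, so both the isoperimetric inequality and the divergence theorem must be invoked in the geometric-measure-theoretic generality already used elsewhere in the paper; this is covered by the standard integral currents machinery in Federer. No quantitative subtlety about the constant is needed, since the lemma only asks for a dimension- and $\MM$-dependent constant.
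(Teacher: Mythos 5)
Your proof is correct, but it follows a genuinely different route from the paper's. You fill the $(d-2)$-cycle $\pd D$ by a minimal-area $(d-1)$-chain $S$ in $\R^d$ via the Federer--Fleming isoperimetric inequality, then argue that $D-S$ bounds and use incompressibility to transfer the flux from $D$ to $S$. The paper uses a simpler replacement surface: the complement $\pd I_t\setminus D$ inside $\pd I_t$. Since $\pd I_t$ is a closed Lipschitz hypersurface, the total flux through it vanishes by incompressibility, so $|\flux(V,D)|=|\flux(V,\pd I_t\setminus D)|\le \MM\min\{|D|,|\pd I_t\setminus D|\}$, and then the isoperimetric inequality on the closed $(d-1)$-manifold $\pd I_t$ gives $\min\{|D|,|\pd I_t\setminus D|\}\le C|\pd D|^{(d-1)/(d-2)}$. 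The paper's route is lighter: it does not need the Federer--Fleming filling theorem or the homology argument that $D-S$ bounds a $d$-chain, only the divergence theorem on the fixed box $I_t$ and a standard isoperimetric inequality on its boundary. Your version is more flexible (it would apply with $\pd I_t$ replaced by any hypersurface carrying $D$), but here that extra generality is not needed, and the paper's in-surface complement trick is the more economical path to the same constant $C=C(d,\MM)$.
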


\begin{proof}
By incompressibility,
$$
 |\flux(V,D)| = |\flux(V,\pd I_t\setminus D)| .
$$
Hence
$$
|\flux(V,D)| \le \MM \min\{|D|,|\pd I_t\setminus D|\} .
$$ 
By the isoperimetric inequality,
$$
 |\pd D| \ge C\min\{|D|,|\pd I_t\setminus D|\}^{(d-2)/(d-1)}
$$
for some $C=C(d)$. The last two inequalities imply \eqref{flux-isoperimeter}.
\end{proof}

\begin{lem}\label{lemma-flux-mixed}
For every $\ep>0$ there exist $A_0>0$ and $C_0=C_0(\ep,V)>0$ such that for every $t>A_0$
the following holds.
Let $D$ be a subset of $\partial I_t$ with a $(d-2)$-rectifiable boundary $\pd D$.
Then
\be\label{flux-mixed}
 |\flux(V,D)| \le C_0|\pd D|+\ep t^{d-1} .
\ee
\end{lem}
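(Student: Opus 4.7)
The plan is to partition each face of $\pd I_t$ into $(d-1)$-dimensional cubes $G_i$ of a common edge length $A$, chosen via Lemma~\ref{Aepsilon} so that $|\flux(V,G_i)|\le \ep' A^{d-1}$ for a small parameter $\ep'=\ep'(\ep,d)$ to be fixed later. This forces $A=A(\ep,V)$ to depend on $\ep$ and $V$ but not on $t$, and the $A_0$ in the statement can be taken a fixed multiple of $A$. A minor nuisance is that $A$ need not divide $2t$ exactly; I will handle this by choosing $A$ in a range $[A_0,2A_0]$ compatible with $t$, or by absorbing a near-edge strip of area $O(A t^{d-2})$ into the final $\ep t^{d-1}$ term.

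Next I estimate $|\flux(V,D\cap G_i)|$ cube by cube. Writing $E=D\cap G_i$ and $b_i=\min(|E|,|G_i\setminus E|)$, the identity $\flux(V,E)=\flux(V,G_i)-\flux(V,G_i\setminus E)$ combined with Lemma~\ref{Aepsilon} and the trivial bound $|\flux(V,F)|\le \MM|F|$ yields
$$
|\flux(V,D\cap G_i)|\le \ep' A^{d-1}+\MM\, b_i.
$$
The relative isoperimetric inequality in the convex $(d-1)$-cube $G_i$, whose constant is scale invariant, further bounds $b_i\le C_d\, a_i^{(d-1)/(d-2)}$, where $a_i:=|\pd D\cap G_i|$.

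The main obstacle is that the exponent $(d-1)/(d-2)>1$ prevents a direct summation of $\sum b_i$ linearly in $\sum a_i\le|\pd D|$. I resolve this by splitting cubes at the threshold $T=c_d A^{d-2}$, chosen so that $C_d T^{(d-1)/(d-2)}\le A^{d-1}/2$. For \emph{small} cubes with $a_i\le T$, we have $a_i^{(d-1)/(d-2)}\le T^{1/(d-2)} a_i=c_d^{1/(d-2)} A\, a_i$, hence $\sum_{\text{small}} b_i\le C_d A\,|\pd D|$. For \emph{large} cubes with $a_i>T$, I use only $b_i\le A^{d-1}/2$, but note that their number is at most $|\pd D|/T$, giving $\sum_{\text{large}} b_i\le C A\,|\pd D|$ as well.

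Summing over all cubes,
$$
|\flux(V,D)|\le \sum_i\bigl(\ep' A^{d-1}+\MM b_i\bigr)\le \ep'\,|\pd I_t|+C\MM A\,|\pd D|,
$$
and choosing $\ep'$ so that $\ep'\,|\pd I_t|\le \ep t^{d-1}$ finishes the proof with $C_0=C\MM A=C_0(\ep,V)$.
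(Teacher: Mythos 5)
Your proposal is correct and follows essentially the same strategy as the paper: tile $\pd I_t$ into $(d-1)$-dimensional cubes of a fixed scale $A$ chosen via Lemma \ref{Aepsilon}, bound the flux through each cube's intersection with $D$ by combining Lemma \ref{Aepsilon} with the relative isoperimetric inequality, and sum over all cubes. The only variation is cosmetic: to turn the super-linear isoperimetric bound $b_i\le C\,a_i^{(d-1)/(d-2)}$ together with the cap $b_i\le A^{d-1}$ into a bound linear in $|\pd D|$, you split the cubes at a threshold in $a_i$, whereas the paper writes $S_i=S_i^{(d-2)/(d-1)}S_i^{1/(d-1)}\le C P_i A$ and sums directly.
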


\begin{proof}
By Lemma \ref{Aepsilon}, there exists $A_0>0$ such that the flux of $V$
through every $(d-1)$-dimensional cube with edge length $A>A_0$ is bounded
by $\frac\ep{2d}A^{d-1}$.
If $t>A_0$, then $t = m A$ where $m\in\Z$ and $A_0\le A\le 2A_0$.
We divide $\pd I_t$ into $(d-1)$-dimensional cubes $Q_i$, $i=1,2,\dots,2dm^{d-1}$,
with edge length $A$.
For each $i$, define $P_i=|\pd D\cap Q_i|$
and
$ S_i = \min\{|Q_i\cap D|, |Q_i\setminus D|\} $.
By the isoperimetric inequality,
$$
 S_i \le C P_i^{(d-1)/(d-2)}
$$
for some $C=C(d)$. And, trivially,
$$
 S_i \le |Q_i| = A^{d-1} .
$$
Combining these two inequalities we obtain
\be\label{CPA}
 S_i = S_i^{(d-2)/(d-1)} S_i^{1/(d-1)} \le C P_i A .
\ee
By the choice of $A_0$ we have
$$
 |\flux(V,Q_i)| \le \frac\ep{2d} |Q_i| .
$$
Hence
$$
 \bigl| |\flux(V,Q_i\cap D)|-|\flux(V,Q_i\setminus D)| \bigr| 
 \le \frac\ep{2d} |Q_i| .
$$
Since at least one of the terms $|\flux(V,Q_i\cap D)|$ and 
$|\flux(V,Q_i\setminus D)|$ is bounded by $\MM S_i$, it follows that
$$
 |\flux(V,Q_i\cap D)| \le \MM S_i + \frac\ep{2d} |Q_i| \le CP_iA + \frac\ep{2d} |Q_i|.
$$
Summing up over all $i$ and setting $C_0=2CA_0$ yields \eqref{flux-mixed}.
\end{proof}

\section{Proof of Theorem \ref{main}}

Theorem \ref{main} is an immediate corollary of the following lemma.

\begin{lem}\label{reachable-set-is-all}
For every $x\in\R^d$, the reachable set $\mathcal R_x$ is the entire $\R^d$.
\end{lem}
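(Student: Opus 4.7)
My plan is to argue by contradiction. Assume $\mathcal R:=\mathcal R_x\ne\R^d$, so by Lemma \ref{lip-boundary} the boundary $\pd\mathcal R$ is a nonempty locally Lipschitz hypersurface, and introduce the growth function
\[
\sigma(t):=|\pd\mathcal R\cap I_t|,\qquad t>0.
\]
The strategy is to squeeze $\sigma$ between incompatible upper and lower growth rates. For the upper bound, apply the divergence theorem to the Lipschitz domain $\mathcal R\cap I_t$: since $\operatorname{div} V=0$, the total flux through its boundary vanishes, and combining this with Lemma~\ref{big-flux} gives
\[
\sigma(t)\le \flux(V,\pd\mathcal R\cap I_t)=\flux(V,\mathcal R\cap\pd I_t)\le\MM\,|\pd I_t|\le C_{\mathrm{up}}\,t^{d-1}.
\]
Set $D(t):=\mathcal R\cap\pd I_t$, so that (up to a $(d-2)$-null set) $\pd D(t)=\pd\mathcal R\cap\pd I_t$. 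Applying the coarea formula to the $1$-Lipschitz function $r(y)=\|y\|_\infty$ restricted to $\pd\mathcal R$ (whose tangential Jacobian is at most $1$) yields the key bridge $\sigma'(t)\ge|\pd D(t)|$ for a.e.\ $t$, converting flux-vs-perimeter bounds on $\pd D(t)$ into growth information about $\sigma$.

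The two flux estimates of Section~\ref{sec-flux} now supply complementary lower bounds on $\sigma'$. First, plugging the lower bound $\flux(V,D(t))\ge\sigma(t)$ from above into the isoperimetric estimate of Lemma~\ref{lemma-flux-iso} produces the autonomous differential inequality
\[
\sigma'(t)\ge|\pd D(t)|\ge c_*\sigma(t)^{(d-2)/(d-1)}.
\]
Since $\pd\mathcal R\ne\emptyset$ forces $\sigma(t_0)>0$ for some $t_0$, integrating this ODE yields the \emph{polynomial} lower bound $\sigma(t)\ge c_1\,t^{d-1}$ valid for all $t\ge T_0$, where $c_1=c_1(d,\MM)>0$ is a fixed constant not depending on any small parameter. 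With $c_1$ in hand, fix $\ep:=c_1/2$; Lemma~\ref{lemma-flux-mixed} then supplies $A_0$ and $C_0$ (depending on $\ep$) such that for every $t>A_0$,
\[
\sigma(t)\le C_0|\pd D(t)|+\ep\,t^{d-1}\le C_0\sigma'(t)+\ep\,t^{d-1}.
\]
Substituting $\sigma(t)\ge 2\ep\,t^{d-1}$ yields $\sigma'(t)\ge\tfrac{c_1}{2C_0}\,t^{d-1}$ for every sufficiently large $t$, and one further integration gives $\sigma(t)\ge c_2\,t^d$ for $t\gg 1$, contradicting the upper bound $\sigma(t)\le C_{\mathrm{up}}\,t^{d-1}$ obtained earlier.

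The crux of the argument—and where I expect the main obstacle—is the \emph{$\ep$-independence} of the constant $c_1$ in the isoperimetric-derived lower bound. Only this independence lets me subsequently pick $\ep<c_1$ and cancel the $\ep\,t^{d-1}$ drift-error term contributed by Lemma~\ref{lemma-flux-mixed}; it is Lemma~\ref{lemma-flux-iso}, whose constant depends only on $d$ and $\MM$, that delivers exactly this feature. The remaining items—justifying the coarea formula on the Lipschitz hypersurface $\pd\mathcal R$, arranging that $\pd D(t)$ is $(d-2)$-rectifiable for a.e.\ $t$ so that Lemmas~\ref{lemma-flux-iso} and~\ref{lemma-flux-mixed} apply at such $t$, and running the divergence theorem on the Lipschitz domain $\mathcal R\cap I_t$—are routine but must be checked carefully.
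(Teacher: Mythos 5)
Your proposal follows essentially the same strategy as the paper's proof: bound the inward flux through $\pd\mathcal R_x\cap I_t$ below by its area (Lemma~\ref{big-flux}), transfer it by incompressibility to $\pd I_t$, squeeze via the isoperimetric flux bound (Lemma~\ref{lemma-flux-iso}) and the coarea inequality to get a polynomial lower bound, then invoke Lemma~\ref{lemma-flux-mixed} with $\ep$ chosen small relative to the isoperimetric constant, and finish with a growth contradiction against the crude $O(t^{d-1})$ upper bound. The only differences are cosmetic: you work with $\sigma(t)=|\pd\mathcal R_x\cap I_t|$ directly, whereas the paper introduces the auxiliary absolutely continuous function $A(t)=\int_0^t P(s)\,ds$ with $\sigma(t)\ge A(t)$, which makes the Gr\"onwall-type integration cleaner (since $\sigma$ is a priori only monotone, one should integrate the inequality $\sigma(t)-\sigma(t_0)\ge\int_{t_0}^t P(s)\,ds$ rather than differentiate $\sigma$; the conclusion is the same). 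At the final step the paper derives exponential growth of $A(t)$, while you do one more integration to obtain $\sigma(t)\gtrsim t^d$; both contradict the $O(t^{d-1})$ bound. One small sign caveat: the equality $\flux(V,\pd\mathcal R\cap I_t)=\flux(V,\mathcal R\cap\pd I_t)$ holds only after specifying compatible co-orientations (inward on the lateral surface, outward on $\pd I_t$); the paper sidesteps this by writing the identity with absolute values.
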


\begin{proof}
Arguing by contradiction, assume that $\mathcal R_x\ne\R^d$ for some $x\in\R^d$.
Lemma \ref{lip-boundary} implies that $\pd \overline{\mathcal R_x} = \pd\mathcal R_x$
and hence $\overline{\mathcal R_x}\ne\R^d$.
For $t>0$ denote 
$$
D_t = \overline{\mathcal R_x} \cap \pd I_t, \quad S_t = \pd\mathcal R_x \cap I_t ,
\quad L_t = \pd\mathcal R_x \cap \pd I_t,
$$
where $I_t$ is the cube defined in Section \ref{sec-flux}.
Since $\pd\mathcal R_x$ is a nonempty locally Lipschitz hypersurface,
we have $|S_t|>0$ for all $t>t_0$,
where $t_0$ is the distance from 0 to $\pd\mathcal R_x$
and $|S_t|$ is the $(d-1)$-dimensional volume of $S_t$.

The sets $L_t$ are slices of $\pd\mathcal R_x$ by level sets 
of the 1-Lipschitz function $x\mapsto\max|x_i|$.
Hence $L_t$ is a $(d-2)$-rectifiable set for almost every $t$.
In the sequel we consider only those values $t>t_0$ for
which $L_t$ is $(d-2)$-rectifiable.
In particular, the $(d-1)$-dimensional volume of $L_t$ is zero.
This implies that $L_t=\pd D_t$, where $\pd D_t$ denotes the boundary of $D_t$ in $\pd I_t$.
Let $P(t)=|L_t|=|\pd D_t|$ denote the $(d-2)$-dimensional volume of this set.
By the co-area inequality,
$$
 |S_t| \ge A(t) := \int_0^t P(s) \, ds .
$$

Observe that the union $D_t\cup S_t$ is the boundary of the set $\overline{\mathcal R_x}\cap I_t$.
In addition, $D_t\cap S_t=L_t$ which is $(d-2)$-dimensional. 
Hence, by the incompressibility condition~\eqref{tres},
$$
 |\flux (V, D_t)| = |\flux (V, S_t)| \ge  |S_t| .
$$
where the last inequality follows from Lemma~\ref{big-flux}.
Thus
\be\label{fluxVDt}
 |\flux (V, D_t)| \ge |S_t| \ge A(t) .
\ee
By Lemma \ref{lemma-flux-iso},
$$
 |\flux (V, D_t)|  \le C P(t)^{(d-1)/(d-2)} .
$$
In particular $P(t)>0$ for $t>t_0$. It follows that
$A(t)>0$ for all $t>t_0$ and
$$
 \tfrac{d}{dt}A(t) = P(t) \ge c\, A(t)^{(d-2)/(d-1)} .
$$
for some $c>0$ and almost every $t>t_0$. Therefore
$A(t) \ge c_0 (t-t_0)^{d-1}$ and hence
$$
 |\flux (V, D_t)| \ge c_0 (t-t_0)^{d-1}
$$
where $c_0>0$ is some constant depending on $V$.
Now we apply Lemma \ref{lemma-flux-mixed} to $\ep=c_0/3$ and obtain
\be\label{two-polynomials}
 c_0(t-t_0)^{d-1} \le |\flux (V, D_t)| \le C P(t) + \frac{c_0}3 t^{d-1} .
\ee
For all sufficiently large $t$ we have
$(t-t_0)^{d-1} > \frac23t^{d-1}$ and
therefore
$
C P(t) + \frac{c_0}3 t^{d-1} \ge  \frac{2c_0}3t^{d-1}
$.
Hence $ \frac{c_0}3 t^{d-1} \le C P(t)$.
Thus \eqref{two-polynomials}
implies
$$
 |\flux (V, D_t)| \le C P(t) 
$$
for all sufficiently large $t$.
Combining this with \eqref{fluxVDt} yields
$
 \tfrac d{dt} A(t) \ge c A(t)
$
for all sufficiently large $t$.
Hence $A(t)$ grows exponentially.

On the other hand,
$$
 |\flux(V,D_t)| \le \MM |\pd I_t| \le C t^{d-1} .
$$
This and \eqref{fluxVDt} imply that $A(t)$ grows 
at most polynomially.
This contradiction proves Lemma \ref{reachable-set-is-all}
and Theorem \ref{main}.
\end{proof}

\begin{cor}\label{add}
For every compact set $B\subset\R^d$ there exists $\tau_0=\tau_0(V,B)>0$
such that $\tau(x,y)\le\tau_0$ for all $x,y\in B$.
\end{cor}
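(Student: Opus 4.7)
The plan is to dominate $\tau(x,y)$ by routing every pair through a single hub point $x_0\in B$, using Theorem~\ref{main} applied separately to $V$ and to $-V$, together with compactness of $B$ and the subadditivity of $\tau$.

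First I would fix any $x_0\in B$. Lemma~\ref{reachable-set-is-all} gives $\mathcal R_{x_0}=\R^d$; since $\mathcal R_{x_0}=\bigcup_{t>0}\mathcal R_{x_0}^t$ is an ascending union of open sets that covers the compact set $B$, some $T_0<\infty$ satisfies $B\subset\mathcal R_{x_0}^{T_0}$, whence $\tau(x_0,y)<T_0$ for every $y\in B$. The less obvious half is a uniform bound on $\tau(x,x_0)$ for $x\in B$. For this I would observe that $-V$ satisfies the same hypotheses as $V$: it is bounded and locally Lipschitz, $\operatorname{div}(-V)=0$, and the small-mean-drift condition \eqref{dos} is invariant under the sign flip $V\mapsto-V$. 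Therefore Lemma~\ref{reachable-set-is-all} applied to $-V$, together with the same compactness argument, furnishes a $T_1<\infty$ with $B$ contained in the analogous reachable set of $x_0$ under the $(-V)$-dynamics. The substitution $X(s):=Y(t-s)$, $\beta(s):=-\alpha(t-s)$ converts a $(-V)$-trajectory from $x_0$ to $x$ with control $\alpha$ into a $V$-trajectory from $x$ to $x_0$ with control $\beta$ of the same duration and the same sup norm, so $\tau(x,x_0)<T_1$ for every $x\in B$.

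Finally, $\tau$ is subadditive under concatenation of controls: splicing a $V$-controlled path from $x$ to $x_0$ with one from $x_0$ to $y$ produces a $V$-controlled path from $x$ to $y$ whose duration is the sum and whose control has sup norm bounded by the max of the two. Hence $\tau(x,y)\le\tau(x,x_0)+\tau(x_0,y)<T_0+T_1$ for all $x,y\in B$, and one takes $\tau_0=T_0+T_1$. The only real care needed (more bookkeeping than obstacle) is confirming that the hypotheses of Theorem~\ref{main} are preserved under $V\mapsto-V$ and that the time-reversal correctly matches reachability of $x_0$ in the $V$-dynamics with reachability from $x_0$ in the $(-V)$-dynamics, with the control sup-norm preserved; both points follow directly from the definitions.
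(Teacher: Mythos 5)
Your proof is correct and matches the paper's argument essentially line for line: route every pair through a fixed hub $x_0\in B$, use Lemma~\ref{reachable-set-is-all} plus compactness to bound $\tau(x_0,y)$ uniformly on $B$, then apply the same reasoning to $-V$ combined with time-reversal to bound $\tau(x,x_0)$, and finish by subadditivity. The only difference is that you spell out the time-reversal substitution and the verification that $-V$ inherits the hypotheses, which the paper leaves implicit.
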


\begin{proof}
Fix $x_0\in B$.
Recall that $\mathcal R_{x_0}$ is the union of nested open sets $\mathcal R_{x_0}^\tau$, $\tau>0$.
Since $\mathcal R_{x_0}=\R^d$ and $B$ is compact, it follows that $B\subset\mathcal R_{x_0}^{\tau_1}$
for some $\tau_1>0$. Thus $\tau(x_0,y)\le \tau_1$ for all $y\in B$.

To complete the proof, we show that $x_0$ can be reached from any point $x\in B$ 
in a uniformly bounded time.
This is equivalent to reaching $x$ from $x_0$
in the flow defined by the opposite vector field $-V$.
Applying the above argument to $-V$ yields 
that there is $\tau_2>0$ such that $\tau(x,x_0)\le\tau_2$ for all $x\in B$.
Hence $\tau(x,y)\le\tau_0:=\tau_1+\tau_2$ for all $x,y\in B$.
\end{proof}

\section{Proof of Theorem~\ref{mainl}}
\label{sec:proof2}

Now we assume that $V$ satisfies the assumptions of Theorem \ref{mainl}.
To prove Theorem \ref{mainl} it suffices to verify the following:
there is a constant $C>0$ such that $\tau(x,y)\le C$
for all $x,y\in\R^d$ satisfying $\|x-y\|\le 1$.

Suppose the contrary.
Then there exist two sequences of points $\{x_n\}$ and $\{y_n\}$ in $\R^d$
such that $\|x_n-y_n\|\le 1$ and $\tau(x_n,y_n)>n$ for all $n$.
Consider shifted vector fields $V_n$ given by 
$$
V_n(x)=V(x-x_n) .
$$
By the Arzela--Ascoli Theorem, there exists a subsequence $\{V_{n_i}\}$
which converges to a vector field $V_0$ uniformly on compact sets.
The vector field $V_0$ inherits boundedness, Lipschitz continuity,
and the small mean drift property \eqref{dos} from $V$.

Thus we can apply Theorem \ref{main} and Corollary \ref{add}
to $V_0$ in place of $V$. By means of rescaling, this works
even if the control is $\frac12$-bounded, that is
if we consider travel times
\be\label{halftau}
  \widetilde\tau(x,y) = \inf \{t\ge 0 \mid X_x^{\alpha/2}(t)=y \text{ for some $\alpha\in\mathcal A_t$} \} .
\ee
By Corollary \ref{add} there is $\tau_0$ such that $\widetilde\tau(x,y)\le\tau_0$ for all $x,y$
from the unit ball centered at 0,
where $\widetilde\tau$ is defined by \eqref{halftau} for the vector field $V_0$.
In particular the travel times $\widetilde\tau(0,y_n-x_n)$ are bounded by $\tau_0$.
Since our vector field is bounded, all trajectories realizing these travel times
are confined to some ball $B_R(0)$. For $n_i$ large enough we have 
$\|V_{n_i}-V_0\|<\frac12$ on $B_R(0)$ and hence all such trajectories
are also trajectories for $V_{n_i}$ with 1-bounded control.
Thus $\tau(x_n,y_n)\le\widetilde\tau(0,y_n-x_n)\le\tau_0$, a contradiction.


\begin{thebibliography}{abc99xyz}



\bibitem{CS} P. Cardaliaguet, P.E. Souganidis, 
{\em Homogenization and enhancement of the G-equation in random environments}, 
Comm. Pure Appl. Math. 66 (10) (2013) 1582--1628.




\bibitem{Federer}
H. Federer,
Geometric measure theory, Springer-Verlag, 1969.


\bibitem{NN} J. Nolen, A. Novikov, 
{\em Homogenization of the G-equation with incompressible random drift}. Commun. Math. Sci. 9 (2011), no. 2, 561--582.



\bibitem{Pet} N. Peters, Turbulent Combustion, Cambridge University Press, Cambridge, 2000.



\bibitem{Wlms85} F.A. Williams,
{\em Turbulent Combustion}, in The Mathematics of Combustion, J.D. Buckmaster, 
Ed. Society for Industrial and Applied Mathematics, 1985, pp. 97--131.



\end{thebibliography}
\end{document}